\newtheorem{Theorem}[subsection]{Theorem}
\newtheorem{Proposition}[subsection]{Proposition}
\newtheorem{Lemma}[subsection]{Lemma}
\theoremstyle{definition}
\newtheorem{Remark}[subsection]{Remark}
\newtheorem{Definition}[subsection]{Definition}
\newtheorem{proposition-definition}[subsection]{Proposition-Definition}
\newcommand{\M}{\mathcal{M}}
\title{Non-emptiness of Brill-Noether Loci over very general quintic 
hypersurface}
\author[K. Dan]{Krishanu Dan}
\address{Chennai Mathematical Institute, H1 Sipcot IT Park, 
Siruseri, Kelambakkam - 603103, INDIA.}
\email{krishanud@cmi.ac.in}
\author[S.Pal]{Sarbeswar Pal}
\address{IISER - Thiruvananthapuram, Computer Science Building,
College of Engineering Trivandrum Campus,
Trivandrum - 695016, Kerala, India}
\email{spal@iisertvm.ac.in}
\keywords{Vector Bundles, Brill-Noether loci, Moduli space, Surfaces}
\subjclass[2010]{14J60, 14H51}
\begin{document}

\begin{abstract}
 In this article we study Brill-Noether loci of moduli space of stable bundles 
 over smooth surfaces. We define Petri map as an analogy with the case of curves.
 We show the non-emptiness of certain Brill-Noether loci 
 over very general quintic hypersurface in $\mathbb{P}^3$, and use the 
 Petri map to produce components of expected dimension.

\end{abstract}

\maketitle

\section{Introduction}

Let $X$ be a smooth, irreducible, projective variety of dimension $n$ over $\mathbb{C}$, 
$H$ be an 
ample divisor on $X$, and let $\M:= \M_{X,H}(r; c_1, \cdots ,c_s)$ be the moduli 
space of rank $r$, $H$-stable vector bundles $E$ over $X$ with Chern classes 
$c_i(E)=c_i,$ where $s:= \text{min}\{r,n\}$. A Brill-Noether locus $B_{r,X,H}^k$ is a closed 
subscheme of $\M$ whose support consists of points $E\in \M$ such that $h^0(X,E) 
\geq k+1$. G\"{o}ttsche et al (\cite{GH}) and M. He (\cite{He}) 
studied the Brill-Noether loci of stable 
bundles over $\mathbb{P}^2$, and Yoshioka (\cite{Yoshioka1}, \cite{Yoshioka2}), 
Markman (\cite{Markman}), Leyenson (\cite{Leyenson1}, \cite{Leyenson2}) 
studied it for $K3$ surfaces.  In the case of smooth, projective, irreducible 
curves $C$ over $\mathbb{C}$, the 
Brill-Noether loci of the moduli space, $\text{Pic}^d(C)$, of degree $d$ line bundles 
on $C$ is well-studied. The questions like non-emptiness, connectedness, 
irreducibility, singular locus etc of Brill-Noether loci are known when $C$ is 
a general curve in the sense of moduli (see e.g. \cite{ACGH}). This concept was 
generalized for vector bundles over curves by Newstead, Teixidor and others. For an 
account of the results and history in this case see \cite{GB} and the references 
therein. 

Recently, in \cite{CM1}, authors have constructed a 
Brill-Noether loci over higher dimensional varieties under the additional 
cohomology vanishing assumptions: $H^i(X,E)=0, \forall i \geq 2$ and for all $E \in \M$. 
This is a natural generalization of the Brill-Noether loci over the 
curves for higher dimensional varieties. In \cite{CM1}, \cite{CM2}, authors gave 
several examples of non-empty Brill-Noether locus, and examples of Brill-Noether 
locus where ``expected dimension'' is not same as the exact dimension. In 
all these examples, the surfaces and/or varieties chosen have the canonical 
bundle has no non-zero sections.
In this article, we defined ``Petri map'' over a smooth projective variety with 
canonical bundle ample,  as an analogue of that for curves. Similar 
to the case of curves, the injectivity of the ``Petri map'' implies the existence 
of smooth points in Brill-Noether loci.
We then use this fact to prove the existence of a smooth point and hence a component of 
expected dimension in the Brill-Noether loci  over a very general quintic hypersurface in 
$\mathbb{P}^3$ where the canonical bundle is ample and globally generated.  

{\it Notation:} We work throughout over the field $\mathbb{C}$ of complex numbers. 
If $X$ is a smooth, projective variety, we denote by $K_X$ the canonical bundle on $X$. 
For a coherent sheaf $\mathcal{F}$ on $X$, we denote by $H^i(X,\mathcal{F})$ the 
$i$-th cohomology group of $\mathcal{F}$ and by $h^i(X,\mathcal{F})$ its (complex) 
dimension. If $V$ is a vector bundle on $X$, we denote by $V^*$ the dual of $V$.

\section{Brill-Noether Loci}

In this section we will briefly recall the construction of Brill-Noether loci over 
higher dimensional varieties, following \cite{CM1}.

Let $X$ be an irreducible, smooth, projective variety of dimension $n$, and let 
$H$ be an ample divisor on $X$. For a torsion free sheaf $F$ over $X$, let 
$c_i(F)$ denotes the $i$-th Chern class of $F$. Set $\mu(F) = \mu_H(F) := 
\frac{c_1(F)\cdot H^{n-1}}{\text{rank}(F)}$. 

\begin{Definition}
 A torsion-free sheaf $F$ over $X$ of rank $r$ is called $H$-{\it semistable} if for 
 all non-zero subsheaf $G$ of $F$ with ${\rm{rank}}(G) < {\rm{rank}}(F)$, we have 
 $$
 \mu(G) \leq \mu(F).
 $$
 We say $F$ is $H$-{\it stable} if the above inequality is strict.
\end{Definition}

Let $\M:= \M_{_{X,H}}(r; c_1, \cdots ,c_s)$ be the moduli space of rank $r$, 
$H$-stable vector bundles $E$ over $X$ with Chern classes $c_i(E)=c_i,$ where 
$s:= \text{min}\{r,n\}$. Assume that $\M$ is a fine moduli space, and let 
$\mathcal{E} \to \M \times X$ be an universal family such that for any 
$t \in \M, E_t:= \mathcal{E}|_{_{t \times X}}$ is a rank $r, H$-stable bundle 
over $X$ with Chern classes $c_i(E_t)=c_i$. Choose an effective divisor $D$ 
on $X$ such that $H^i(X, E_t(D))=0, \forall i \geq 1$ and $\forall t \in M$. 
Let $\mathcal{D}:= \M \times D$ be the product divisor, and $\phi: \M \times X 
\to \M$ be the projection map. From the exact sequence
$$
0 \to \mathcal{E} \to \mathcal{E}(\mathcal{D}) \to 
\mathcal{E}(\mathcal{D})/\mathcal{E} \to 0
$$
on $\M \times X$, we get an exact sequence on $\M$:
$$
0 \to \phi_{_*}(\mathcal{E}) \to \phi_{_*}(\mathcal{E}(\mathcal{D})) 
\xrightarrow{\gamma} \phi_{_*}(\mathcal{E}(\mathcal{D})/\mathcal{E}) \to 
R^1\phi_{_*}(\mathcal{E}) \to 0.
$$
Note that, $\gamma$ is a map between two locally free sheaves of ranks 
$\chi(E_t(D))$ and $\chi(E_t(D)) - \chi(E_t)$ respectively, on $\M$. For an 
integer $k \geq -1$, let $B_{r, X, H}^k \subset \M$ be the $(\chi(E_t(D)) - 
(k+1))$-th determinantal variety associated to the map $\gamma$. Now assume 
$H^i(X, E_t)=0, \forall i \geq 2$ and $\forall t \in \M$. Then we have
$$
{\rm{Support}}(B_{r, X, H}^k) = \{E \in \M : h^0(X,E) \geq k+1 \}.
$$
When $\M$ is not a fine moduli space, it is possible to carry out this 
construction locally and then can be glued together to get a global algebraic 
object. We summarize the above construction as
\begin{Theorem}(\cite{CM1}, Theorem 2.3)\label{T1}
Let $X$ be a smooth, irreducible, projective variety of dimension $n$, 
$H$ be a fixed ample divisor on $X$, and  
$\M:=\M_{_{X, H}}(r; c_1, \cdots, c_s)$ be a moduli space of rank $r, H$-stable vector 
bundles $E$ on $X$ with fixed Chern classes $c_i(E)=c_i, s= \text{min}\{r, n\}.$ 
Assume that for any $E \in \M, H^i(X, E)=0$ for $i \geq 2.$ Then for any $k \geq -1,$ 
there exists a determinantal variety $B_{r,X,H}^k \subset \mathcal{M}$ such that 
$$
\text{Support}(B_{r,X,H}^k) = \{E \in \mathcal{M} \,\, : \,\, h^0(X, E) \geq k+1\}.
$$
Moreover, each non-empty irreducible component of $B_{r,X,H}^k$ has dimension at least 
$$
\text{dim}(\mathcal{M})-(k+1)(k+1 - \chi(r; c_1, \cdots , c_s) )
$$
where $\chi(r; c_1, \cdots , c_s):= \chi(E_t)$ for any $t \in \mathcal{M}$ and 
$$
B_{r,X,H}^{k+1} \subset \text{Sing}(B_{r,X,H}^k)
$$
whenever $B_{r,X,H}^k \neq \mathcal{M}.$
\end{Theorem}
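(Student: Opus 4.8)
The blueprint preceding the statement already carries out most of the construction; my plan is to turn it into a proof by verifying carefully the three points it leaves implicit, namely that $\gamma$ is a morphism of vector bundles with the expected fibres, that its determinantal loci have the asserted support, and that the dimension and singularity claims follow from the general theory of such loci.

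First I would secure the auxiliary divisor $D$. Since the bundles parametrized by $\M$ have fixed rank and Chern classes, the family $\{E_t\}_{t\in\M}$ is bounded, so by Serre vanishing applied uniformly (for instance with $D=mH$, $m\gg0$) one may arrange $H^i(X,E_t(D))=0$ for all $i\geq1$ and all $t$. Setting $\mathcal{D}=\M\times D$ and pushing the short exact sequence forward along $\phi$, this vanishing forces $R^i\phi_*\mathcal{E}(\mathcal{D})=0$ for $i\geq1$ and, by cohomology and base change, makes $\mathcal{A}:=\phi_*\mathcal{E}(\mathcal{D})$ locally free of rank $a:=\chi(E_t(D))$ with fibre $H^0(E_t(D))$. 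The crucial point, and essentially the only place the hypothesis $H^i(X,E_t)=0$ for $i\geq2$ enters, is the middle term: restricting to a fibre identifies $\mathcal{E}(\mathcal{D})/\mathcal{E}$ with $E_t(D)|_D$, and the long exact sequence of $0\to E_t\to E_t(D)\to E_t(D)|_D\to0$ gives $H^i(E_t(D)|_D)\cong H^{i+1}(E_t)$ for $i\geq1$, which vanishes precisely because $i+1\geq2$. Hence $\mathcal{B}:=\phi_*(\mathcal{E}(\mathcal{D})/\mathcal{E})$ is locally free of rank $b:=\chi(E_t(D))-\chi(E_t)$ with base change, and the four-term sequence
$$0 \to \phi_*\mathcal{E} \to \mathcal{A} \xrightarrow{\gamma} \mathcal{B} \to R^1\phi_*\mathcal{E} \to 0$$
is exact with $\gamma$ a genuine map of bundles whose fibre at $t$ is $\gamma_t\colon H^0(E_t(D))\to H^0(E_t(D)|_D)$.

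Next I would read off the support. Defining $B_{r,X,H}^k$ as the $(a-(k+1))$-th determinantal locus of $\gamma$, that is, the vanishing of its $(a-k)\times(a-k)$ minors, the fibre sequence shows $\ker\gamma_t=H^0(E_t)$, so $\operatorname{rank}\gamma_t=a-h^0(E_t)$; thus $\operatorname{rank}\gamma_t\leq a-(k+1)$ if and only if $h^0(E_t)\geq k+1$, which is the claimed support. For the dimension bound I would invoke the classical fact that a determinantal locus $D_\ell(\gamma)$ of a map of bundles of ranks $a,b$ has, at each of its points, codimension at most $(a-\ell)(b-\ell)$; with $\ell=a-(k+1)$ one computes $a-\ell=k+1$ and $b-\ell=(k+1)-\chi$, so every component of $B_{r,X,H}^k$ has dimension at least $\dim\M-(k+1)(k+1-\chi)$. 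The inclusion $B_{r,X,H}^{k+1}\subseteq\operatorname{Sing}(B_{r,X,H}^k)$ is likewise the standard statement that the rank-$\leq(\ell-1)$ locus lies in the singular locus of the rank-$\leq\ell$ locus whenever the latter is a proper subscheme, which is exactly what the hypothesis $B_{r,X,H}^k\neq\M$ guarantees; since $B_{r,X,H}^{k+1}$ is the $(\ell-1)$-st determinantal locus, this gives the assertion.

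I expect the main obstacle to be the globalization when $\M$ is not fine, so that no universal $\mathcal{E}$ exists. There one builds $\gamma$ only over members of an \'etale (or analytic) cover carrying a local universal sheaf, and on overlaps the bundles $\mathcal{A},\mathcal{B}$ and the map $\gamma$ differ by twists by line bundles pulled back from the base. The key verification is that such twists do not change the fibrewise rank of $\gamma$, so the local determinantal loci agree on overlaps and glue to a well-defined closed subscheme $B_{r,X,H}^k\subseteq\M$; one must also check independence of the choice of $D$, which follows by comparing two admissible divisors through a common larger twist. Granting this descent, the codimension and singularity statements are \'etale-local and pass to the glued object, completing the argument.
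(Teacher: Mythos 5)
Your proposal is correct and follows essentially the same route as the paper (which itself reproduces the construction of Costa--Mir\'o-Roig): choose $D$ with uniform vanishing, push forward the sequence $0\to\mathcal{E}\to\mathcal{E}(\mathcal{D})\to\mathcal{E}(\mathcal{D})/\mathcal{E}\to 0$ to get a bundle map $\gamma$ of ranks $\chi(E_t(D))$ and $\chi(E_t(D))-\chi(E_t)$, define $B^k_{r,X,H}$ as the corresponding determinantal locus, and conclude the support, dimension, and singularity statements from the standard theory, gluing \'etale-locally when $\M$ is not fine. The details you add beyond the paper's sketch (Serre vanishing for $D=mH$, cohomology and base change, the identification $H^i(E_t(D)|_D)\cong H^{i+1}(E_t)$ showing exactly where the hypothesis $H^i(X,E_t)=0$ for $i\geq 2$ enters) are accurate fillings-in of the same argument rather than a different approach.
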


\begin{Definition}
 The variety $B_{r,X,H}^k$ is called the {\it $k$-th Brill-Noether locus} of the moduli 
 space $\M$ and the number
 $$
 \rho_{r,X,H}^k := \text{dim}(\mathcal{M})-(k+1)(k+1 - \chi(r; c_1, \cdots , c_s) )
 $$
 is called the {\it generalized Brill-Noether number.}
\end{Definition}

By the above theorem, the dimension of $B_{r,X,H}^k$ is at least $\rho_{r,X,H}^k$. We 
call $\rho_{r,X,H}^k$, the {\it expected dimension} of the Brill-Noether locus $B_{r,X,H}^k$. 
When there is no confusion about $X$ and $H$, we will simply denote these by 
$B_r^k$ and $\rho_r^k$.

\section{Petri Map}

In this section we will define ``Petri Map'' for higher dimensional varieties, as an 
analogue of the one defined for curves. We note that the description of Petri map 
over curves as given in \cite{GB} works for higher dimensional varieties also. For 
convenience, we recall this description.

Let $X$ be an irreducible, smooth, projective variety of dimension $n$, 
$H$ be an ample divisor on $X$, and let 
$\M:= \M_{_{X,H}}(r; c_1, \cdots ,c_s)$ be the moduli space of rank $r$, 
$H$-stable vector bundles $E$ over $X$ with Chern classes $c_i(E)=c_i,$ where 
$s:= \text{min}\{r,n\}$. The tangent space to $\M$ at a point $E\in \M$ is 
given by $H^1(X, E \otimes E^*)$, where $E^*$ is the dual of $E$. A tangent 
vector to $\M$ at $E$ can be identified with a vector bundle $E_{\epsilon}$ on 
$X_{\epsilon}:= X \times \text{Spec}(k[\epsilon]/\epsilon^2)$ whose restriction 
on $X$ is $E$ and it fits into the exact sequence
$$
0 \to E \to E_{\epsilon} \to E \to 0.
$$
We call it a first order deformation of $E$.\\
One can give an explicit description of the bundle $E_{\epsilon}$ as follows: 
Let $\{U_i\}$ be an open cover of $X$ such that $E_i:= E|_{U_i}$ is the trivial 
bundle. Set $U_{ij}:= U_i \cap U_j$, and let $\phi_{ij} \in H^0(U_{ij}, E\otimes E^*)$ 
be the co-boundary map corresponding to $\phi \in H^1(X, E \otimes E^*)$. Consider 
the trivial extension of $E_i$ to $U_i \times \text{Spec}(k[\epsilon]/\epsilon^2)$ 
given by $E_i \oplus \epsilon E_i$. Then the matrix
$$
  \begin{bmatrix}
    \text{Id} & 0  \\
    \phi_{ij} & \text{Id}
    \end{bmatrix}
$$
will give the gluing data for the bundle $E_{\epsilon}$.

Assume that a section $s$ of $E$ can be extended to a section of $E_{\epsilon}$. Then we 
have local sections $(s_i') \in H^0(U_i, E_i)$ such that $(s|_{U_i}, s_i')$ 
defines a section of $E_{\epsilon}$. If this is the case, then we have
$$
  \begin{bmatrix}
    \text{Id} & 0  \\
    \phi_{ij} & \text{Id}
    \end{bmatrix}
    \begin{bmatrix}
    s|_{U_i}  \\
    s_i'
    \end{bmatrix}
    =
    \begin{bmatrix}
    s|_{U_j}  \\
    s_j'
    \end{bmatrix}.
$$
This gives two conditions: $(s|_{U_i})|_{U_{ij}}= (s|_{U_j})|_{U_{ij}}$ and 
$\phi_{ij}(s)=s_j' - s_i'$. The first condition is automatically satisfied, since 
$s$ is a global section and from the second condition we see that, in this case, 
$(\phi_{ij}(s))$ satisfies the co-cycle condition. In other words,  
$(\phi_{ij}(s))$ is in the kernel of the map
$$
H^1(X, E \otimes E^*) \longrightarrow H^1(X, E),  \hspace{0.3 cm} 
(\nu_{ij}) \mapsto (\nu_{ij}(s)).
$$

Let $E \in B_r^k, k \geq 0$ and let $s \in H^0(X, E)$. Then the first order 
deformation of $E$, as an element of $B_r^k$, is the subset $\{E_{\epsilon} : $
the section $s$ can be extended to a section of $E_{\epsilon}\}$ 
of $H^1(X, E \otimes E^*)$, i.e. 
$$
E_{\epsilon} \in \text{Ker}\big(H^1(X, E \otimes E^*) \longrightarrow H^1(X, E),  
\hspace{0.3 cm} (\nu_{ij}) \mapsto (\nu_{ij}(s))\big).
$$
Now assume $E \in B_r^k - B_r^{k+1}$ and let $T$ be the tangent space to $B_r^k$ 
at the point $E$. From the discussion above, we have a map 
$$
\alpha: H^0(X,E) \otimes H^1(X, E \otimes E^*) \longrightarrow H^1(X,E).
$$
This induces the map
$$
\mu: H^0(X,E) \otimes H^{n-1}(X, K_X \otimes E^*) \longrightarrow 
H^{n-1}(X, K_X \otimes E \otimes E^*).
$$
Note that, $T$ can be identified 
with $(\text{Im}(\mu))^{\bot}$. We call the map $\mu$, the {\it Petri map.}

\begin{Remark}
 Let $X$ be an irreducible, smooth, projective surface and $H$ be an ample divisor 
 on $X$. In this case, Petri map, as defined above, is the cup product map
 \begin{equation}\label{eqn 1}
  \mu: H^0(X,E) \otimes H^{1}(X, K_X \otimes E^*) \longrightarrow 
  H^{1}(X, K_X \otimes E \otimes E^*).
 \end{equation}
 If $E$ is a smooth point in the moduli space $\M$, we have
 $$
 \text{dim}(T)= \text{dim}(\mathcal{M}) - h^0(X, E)h^1(X, E) + \text{dim Ker}(\mu).
 $$
 Thus, if the Petri map is injective, then $E$ is a smooth point of $B_r^k$ and 
 and the component of $B^k_r$ through $E$ has the expected dimension.
\end{Remark}

\begin{Remark}\label{rem1}
 The Petri map can also be derived from \cite{He}. 
 Indeed, by taking $\Lambda=\Lambda'=(H^0(X,E),Id,E)$ in \cite[Corollary 1.6]{He}, 
 we see that the above Petri map is dual of the map $Ext^1(E,E) \to 
 Hom(H^0(X,E),H^1(X,E))$ in the given exact sequence.
\end{Remark}

\section{Brill-Noether loci over quintic hypersurface}
 
Let $X$ be a very general quintic hypersurface in $\mathbb{P}^3$. Then we have 
$\text{Pic}(X) \simeq \text{Pic}(\mathbb{P}^3)\simeq \mathbb{Z}$, $K_X \simeq 
\mathcal{O}_X(1)$, $\chi(X,\mathcal{O}_X)=5$. Let $H$ be a hyperplane class, and 
$\M(c_2):=\M_{X,H}(2; 3H, c_2)$ be the moduli space of rank two $H$-stable bundles 
on $X$ with first Chern class $3H$, and second Chern class $c_2$.
It is known (\cite{Simpson1}) that $\M(c_2)$ is irreducible 
for $c_2 \geq 14$, generically smooth for $c_2 \geq 21$, and has the expected 
dimension $4c_2 -60$. Also note that $H^2(X,E)=0, \forall E \in \M$. Thus the 
hypothesis of the Theorem \ref{T1} is satisfied.

Let $C$ be a smooth, irreducible, projective curve in the complete linear system $|3H|$. 
Then the genus of $C, g:= 31$.

\begin{Proposition}\label{P4.1}
 With the notations as above, assume that there is a base point free line bundle 
 of degree $\geq 16$ on $C$ with exactly two sections, then $B^1_2 \subset \M(c_2)$ 
 is non-empty.
\end{Proposition}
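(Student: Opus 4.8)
The plan is to realize a point of $B^1_2 \subset \M(c_2)$ by a relative Serre-type construction from the pair $(C,L)$. Write $\iota : C \hookrightarrow X$ for the inclusion and set
$$
Q := \mathcal{O}_C(3H) \otimes L^{-1},
$$
a line bundle on $C$, so that $Q^{-1} \otimes \mathcal{O}_C(3H) = L$ and $\deg Q = 3H\cdot 3H - d = 45 - d$. I would produce the desired bundle as a locally free extension
$$
0 \to \mathcal{O}_X^{\oplus 2} \to E \to \iota_* Q \to 0 .
$$
The first step is to identify the governing $\mathrm{Ext}$ group. Since $C$ is a smooth divisor and $Q$ a line bundle on it, one has $\mathcal{H}om(\iota_* Q, \mathcal{O}_X)=0$, while $\mathcal{E}xt^1(\iota_* Q, \mathcal{O}_X) \cong \iota_*(Q^{-1}\otimes N_{C/X}) = \iota_* L$ and the higher local $\mathcal{E}xt$ sheaves vanish. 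The local-to-global spectral sequence then gives $\mathrm{Ext}^1(\iota_* Q, \mathcal{O}_X) \cong H^0(C,L) \cong \mathbb{C}^2$, using the hypothesis that $L$ has \emph{exactly} two sections. Writing $\mathcal{O}_X^{\oplus 2} = \mathcal{O}_X \otimes H^0(C,L)^\vee$ and identifying $\mathrm{Ext}^1(\iota_* Q, \mathcal{O}_X^{\oplus 2}) \cong \mathrm{End}(H^0(C,L))$, I would take the extension class corresponding to the identity.

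The delicate point is local freeness of $E$, and this is where I expect the main work to lie. Under the local-to-global identification the chosen extension class corresponds to the pair of sections $(s_0,s_1)$ spanning $H^0(C,L)$, viewed as a section of $\iota_* L \otimes \mathbb{C}^2$. A local analysis of the extension at a point $p\in C$ (using the length-one resolution of $\iota_*Q$ by the local equation of $C$) shows that $E$ is locally free at $p$ precisely when $s_0,s_1$ do not both vanish at $p$. Hence base-point-freeness of $L$ forces $E$ to be a vector bundle on all of $X$; establishing this matching between the extension class, the sections of $L$, and the generation of $\mathcal{E}xt^1$ is the crux of the argument.

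It remains to pin down the invariants and stability. From $\chi(E) = 2\chi(\mathcal{O}_X) + \chi(C,Q)$ together with Hirzebruch--Riemann--Roch on $X$ one computes, routinely, $c_1(E) = 3H$ and $c_2(E) = d$, so $E$ is a candidate point of $\M(d)$; since $H^2(X,E)=0$ holds automatically, Theorem \ref{T1} applies. For $H$-stability, $\mathrm{Pic}(X)\cong\mathbb{Z}$ and $\mu(E) = \tfrac{3H\cdot H}{2} = \tfrac{15}{2}$, so it suffices to exclude a sub-line-bundle $\mathcal{O}_X(aH)\hookrightarrow E$ with $a\geq 2$, i.e. to show $H^0(X,E(-2H))=0$. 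Twisting the defining sequence by $-2H$ and using $H^0(\mathcal{O}_X(-2H))=0$ reduces this to $H^0(C, Q\otimes\mathcal{O}_C(-2H))=0$, which holds because that line bundle has degree $15-d<0$ once $d\geq 16$; this is exactly where the degree hypothesis is used. As $5a \neq \tfrac{15}{2}$, semistability cannot be strict, so $E$ is $H$-stable and $E\in\M(d)$.

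Finally, applying $H^0$ to the defining sequence yields an injection $\mathbb{C}^2 = H^0(\mathcal{O}_X^{\oplus 2}) \hookrightarrow H^0(X,E)$, whence $h^0(X,E)\geq 2$ and $E \in B^1_2$. This exhibits a point of $B^1_2 \subset \M(d)$, proving non-emptiness. In summary, the three hypotheses on $L$ enter in three distinct places: having exactly two sections fixes the rank-two trivial summand, base-point-freeness gives local freeness of $E$, and degree $\geq 16$ yields stability.
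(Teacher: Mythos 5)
Your proposal is correct, and it produces exactly the same bundle as the paper, but built from the opposite direction. The paper starts from the evaluation map and takes the elementary transformation $0 \to F \to H^0(C,A)\otimes\mathcal{O}_X \to A \to 0$ (base-point-freeness gives surjectivity of evaluation, while local freeness of $F$ and the invariants $c_1(F)=-3H$, $c_2(F)=\deg A$, $h^0(F)=h^1(F)=0$ are quoted from Huybrechts--Lehn, Ch.\ 5, Prop.\ 5.2.2); dualizing then yields precisely your extension $0 \to \mathcal{O}_X^{\oplus 2} \to F^* \to \mathcal{O}_C(C)\otimes A^* \to 0$, and the paper's point of $B^1_2$ is $E=F^*$. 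You instead build this extension directly by a Serre-type construction: you identify $\mathrm{Ext}^1(\iota_*Q,\mathcal{O}_X)\cong H^0(C,L)$ via local-to-global Ext, take the identity class in $\mathrm{End}(H^0(C,L))$, and prove local freeness by the local computation at points of $C$, which is where base-point-freeness enters for you. The stability arguments then reduce to the identical vanishing: twist the extension by $\mathcal{O}_X(-2H)$ and use $\deg(\mathcal{O}_C(H)\otimes A^*)=15-d<0$ for $d\geq 16$; in fact your version is slightly more streamlined, since arguing directly on $E$ (slope $15/2$, so a destabilizing saturated subsheaf is a line bundle $\mathcal{O}_X(a)$ with $a\geq 2$) avoids the paper's intermediate steps $m\geq -1$, $m\leq 0$, $m\neq 0$ for $F$. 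What the paper's route buys is brevity, since local freeness and the Chern/cohomology data come for free from the cited result, and the vanishing $h^0(F)=0$ it needs for stability is part of that package; what your route buys is a self-contained argument that makes completely transparent where each hypothesis is used: exactly two sections fixes the trivial rank-two subsheaf, base-point-freeness gives local freeness, and degree $\geq 16$ gives stability.
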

\begin{proof}
 Let $A$ be a base point free line bundle on $C$ with $h^0(C,A)=2$. Consider the 
 elementary transformation
 \begin{equation}\label{eqn4.1}
  0 \to F \to H^0(C,A)\otimes \mathcal{O}_X \to A \to 0.
 \end{equation}
 Then $F$ is a rank two vector bundle on $X$ with $c_1(F)=-3H$, $c_2(F)=\text{deg}(A)$, 
 $h^0(X,F)=0=h^1(X,F)$ (\cite[Chapter 5, Proposition 5.2.2]{HL}). Dualizing the 
 above exact sequence we get
 \begin{equation}\label{eqn4.2}
  0 \to H^0(C,A)^*\otimes \mathcal{O}_X \to F^* \to \mathcal{O}_C(C)\otimes A^* \to 0.
 \end{equation}
 Thus $h^0(X,F^*)=2+h^0(C, \mathcal{O}_C(C)\otimes A^*) \geq 2$.\\
 {\it Claim:} $F^*\in B^1_2$.\\
 It is sufficient to show that $F$ is $H$-stable. Let $\mathcal{O}_X(m)$ destabilizes 
 $F$. Then $m \geq -1$. On the other hand, from (\ref{eqn4.1}), we have $m \leq 0$. 
 Since $h^0(X,F)= 0$, $m \neq 0$. Thus we are reduced to show that 
 $h^0(X, F \otimes \mathcal{O}_X(1))=0$. Note that $F \otimes \mathcal{O}_X(1) \simeq 
 F^* \otimes \mathcal{O}_X(-2)$. Tensoring the exact sequence (\ref{eqn4.2}) by 
 $\mathcal{O}_X(-2)$ we get
 $$
 0 \to \mathcal{O}_X(-2)^{\oplus 2} \to F^*\otimes \mathcal{O}_X(-2) \to 
 \mathcal{O}_C(H)\otimes A^* \to 0.
 $$
 Since $\text{deg}(\mathcal{O}_C(H)\otimes A^*) < 0$, $h^0(C, \mathcal{O}_C(H)\otimes A^*) 
 =0$ and consequently $h^0(X, F^*\otimes \mathcal{O}_X(-2))=0.$
\end{proof}

Since $C \in |3H|$, $C$ is a smooth complete intersection of two smooth hypersurfaces 
of $\mathbb{P}^3$ of degrees $5$ and $3$. Thus by \cite[Page 139, C-4]{ACGH}, $C$ does 
not have a $g^1_5$. In particular, $C$ is not hyperelliptic, trigonal or tetragonal 
(i.e. $C$ does not have a $g^1_2, g^1_3, g^1_4$ respectively). Also note that 
$K_C = \mathcal{O}_C(4)$. Now for $D=\mathcal{O}_C(3)$, $K_C \otimes \mathcal{O}_C(-D) 
= \mathcal{O}_C(1)$, and this gives an embedding of $C \hookrightarrow \mathbb{P}^3$. 
So by \cite[Page 221, B-4]{ACGH}, $C$ is not a bi-elliptic.

\begin{Proposition}\label{P4.2}
 With the notations as above, for $28 \leq d \leq 32$, there exists a base point free 
 line bundle of degree $d$ on $C$. 
\end{Proposition}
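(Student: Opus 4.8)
The plan is to produce each bundle by perturbing $\mathcal{O}_C(2)$, so I would first record its properties. As $C$ is a complete intersection of type $(3,5)$ it is projectively normal, whence $h^0(C,\mathcal{O}_C(2)) = h^0(\mathbb{P}^3,\mathcal{O}(2)) = 10$ and the complete system $|\mathcal{O}_C(2)|$ is the one cut out by quadrics, i.e. the composite $C \hookrightarrow \mathbb{P}^3 \xrightarrow{v_2} \mathbb{P}^9$ with the $2$-uple embedding; in particular $\mathcal{O}_C(2)$ is very ample of degree $30$, which settles $d=30$. The refinement I would prove is that every subscheme of $C$ of length $\le 3$ imposes independent conditions on $|\mathcal{O}_C(2)|$, so that $h^0(\mathcal{O}_C(2)(-D)) = 10-\deg D$ for all effective $D$ with $\deg D \le 3$. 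This follows because a line of $\mathbb{P}^9$ meets $v_2(\mathbb{P}^3)$ in length $\le 2$ (the Veronese is cut out by quadrics and contains no line), so no length-$3$ subscheme of $v_2(C)$ lies on a line. Then for $d=29$ and $d=28$ I would take $L=\mathcal{O}_C(2)(-p)$ and $L=\mathcal{O}_C(2)(-p-q)$: base-point-freeness is exactly the statement $h^0(L(-x))=h^0(L)-1$ for every $x$, which is the independent-conditions property for the length-$2$ and length-$3$ schemes $p+x$ and $p+q+x$.

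For $d=31,32$ the naive idea of adding points to $\mathcal{O}_C(2)$ fails: since $K_C-\mathcal{O}_C(2)=\mathcal{O}_C(2)$ is base point free one finds $h^0(\mathcal{O}_C(2)(p))=10$, so the added point is forced into the base locus. Instead I would argue with the Brill--Noether loci $W^1_d(C)$. By the Kleiman--Laksov existence and dimension theorems every component of $W^1_d$ has dimension $\ge$ the classical Brill--Noether number $\rho(31,1,d)=2d-33$, while the line bundles of $W^1_d$ carrying a base point lie in the image of $W^1_{d-1}\times C \to \mathrm{Pic}^d(C)$, of dimension $\le \dim W^1_{d-1}+1$. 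For $d=31$ I would note $W^1_{31}=K_C-W_{29}$, so $\dim W^1_{31}=\dim W_{29}=29$, whereas Martens' theorem for the non-hyperelliptic curve $C$ gives $\dim W^1_{30}\le 27$ (the bound $d-2r-1$); hence the base-pointed locus has dimension $\le 28<29$, and a general $L=K_C(-D)$, with $D$ general effective of degree $29$, is base point free with $h^0(L)=2$. For $d=32=g+1$ I would instead show that a general line bundle $L$ works directly: it has $h^0(L)=2$, and $h^0(L(-x))=1$ for every $x$ because the translate $(K_C-L)+C\subset \mathrm{Pic}^{29}(C)$ avoids the $29$-dimensional locus $W_{29}$ for general $L$.

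The difficulty is concentrated in two independent points. For the small degrees it is the step from very ampleness to the no-trisecant (independent-conditions) property needed at $d=28$, where the specific geometry of the $2$-uple embedding must be used rather than a soft separation argument. For the large degrees it is the upper bound $\dim W^1_{30}\le 27$, i.e. Martens' theorem for the non-hyperelliptic $C$; I expect this to be the genuine obstacle, since, unlike the lower-bound half of the dimension count, it really constrains how special $C$ may be. The previously recorded absence of a $g^1_5$ and of a bielliptic structure places $C$ comfortably inside the range where these bounds are valid, although for the two cases $d=31,32$ treated here the non-hyperelliptic hypothesis alone already suffices.
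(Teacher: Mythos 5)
Your proposal proves the proposition as it is literally worded, but it misses what the proposition is actually for, and this is a genuine gap for three of the five degrees. Read against its proof and its use in the paper, Proposition \ref{P4.2} must supply base point free line bundles of degree $d$ with \emph{exactly two} sections: the paper's own proof ends every one of its five cases with such a bundle, and the downstream applications require it --- in Proposition \ref{P4.1} the elementary transformation $0 \to F \to H^0(C,A)\otimes\mathcal{O}_X \to A \to 0$ produces a \emph{rank-two} bundle only because $h^0(C,A)=2$, and Theorem \ref{T4.1} feeds the output of Proposition \ref{P4.2} directly into Proposition \ref{P4.1}. Your bundles for $d=28,29,30$, namely $\mathcal{O}_C(2)(-p-q)$, $\mathcal{O}_C(2)(-p)$ and $\mathcal{O}_C(2)$, have $h^0=8,9,10$; they satisfy the letter of the statement (which for these degrees is indeed almost contentless) but cannot play the required role, so non-emptiness of $B^1_2$ for $c_2=28,29,30$ would not follow from your argument. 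Producing a base point free \emph{complete pencil} is exactly where the difficulty sits, and it is why the paper invokes the Brill--Noether machinery in the low-degree cases: the existence bound $\dim W^1_d \ge \rho=2d-33$ must be played against upper bounds on $\dim W^1_e$, $e<d$, strong enough to exclude that every pencil acquires a base point --- Mumford's theorem (needing $C$ non-trigonal and non-bielliptic) at $d=29$, Keem's bounds (needing that $C$ has no $g^1_4$) at $d=28$, and Keem's existence theorem for a base point free complete $g^1_{30}$ at $d=30$. This is precisely why the paper establishes beforehand that $C$ has no $g^1_5$ and is not bielliptic; your closing remark that ``the non-hyperelliptic hypothesis alone already suffices'' is accurate only for $d=31,32$, i.e.\ for the part of the proposition that was not the bottleneck.

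For $d=31$ and $d=32$ your arguments are correct, do deliver $h^0=2$, and are essentially the paper's Cases I--II (Brill--Noether dimension counts plus Martens), implemented somewhat more cleanly: residuation $W^1_{31}=K_C-W_{29}$ replaces the paper's appeal to $\rho$, and your incidence count $\{(L,x): K_C-L+x \in W_{29}\}$ at $d=32$ is in fact more careful than the paper's Case I, which strips base loci and compares $\dim(W^1_{32}-W^2_{32})=31$ with $\dim\bigcup_{e\le 31}W^1_e \le 30$ without accounting for the positive-dimensional fibres of $L \mapsto L(-B_L)$ (the fibre over a pencil of degree $e$ can have dimension $32-e$, and $(e-1)+(32-e)=31$ gives no contradiction). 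So the two high-degree cases stand; the proposal's genuine gap is at $d=28,29,30$, where the elegant Veronese argument proves the wrong statement.
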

\begin{proof}
 Let us denote by $W^r_d(C)$ the Brill-Noether loci of degree $d$ line bundles $L$ on $C$ 
 with $h^0(C,L)\geq r+1$.\\
 {\it Case I:} $d=32$.\\
 In this case, $\text{dim}W^1_{32}(C)=31$. If $W^1_{32}(C)-W^2_{32}(C) (\neq \varnothing)$ 
 does not contain any base point free line bundle, then tensoring by the ideal sheaf of 
 the base locus, we obtain a family of base-point free line bundles with exactly two 
 sections of dimension $31$ and is contained in $\bigcup_{e\leq 31}W^1_e(C)$. Now 
 $\text{dim}W^0_e(C)=e \leq 31=g$, and $W^1_e(C)$ is a proper closed subset of $W^0_e(C)$. So 
 $\text{dim}W^1_e(C)\leq e-1 \leq 30$. Thus $\text{dim}(\bigcup_{e\leq 31}W^1_e(C)) < 31$, 
 a contradiction.\\
 {\it Case II:} $d=31$.\\
 In this case, $\text{dim}W^1_{31}(C) \geq 29$.  Applying Martens' theorem 
 \cite[Chapter IV, Theorem 5.1]{ACGH}, 
 we see that $\text{dim}(\bigcup_{2\leq e \leq 30}W^1_e(C))<29$. Note that, for any $L \in \text{Pic}(C)$ 
 with $h^0(C,L)=2$, if $B$ is the base locus of $|L|$, then $\text{deg}(B)\leq \text{deg}(L) -2$. 
 Now arguing as above, we get a base point free line bundle on $C$ of degree $31$ with 
 exactly two sections.\\
 {\it Case III:} $d=30$.\\
 This follows from the existence of base point free complete $g^1_{30}$ on $C$ 
 \cite[Theorem 1.4]{Keem}.\\
 {\it Case IV:} $d=29$.\\
 We have $\text{dim}W^1_{29}(C)\geq 25$. By Mumford's theorem \cite[Chapter IV, Theorem 5.2]{ACGH}, we get 
 $\text{dim}(\bigcup_{2\leq e \leq 28}W^1_e(C))<25$. Now we argue as in Case I to conclude.\\
 {\it Case V:} $d=28$.\\
 Here $\text{dim}W^1_{28}(C)\geq 23$. Then using Keem's theorem \cite[Page 200]{ACGH}, we get 
 $\text{dim}(\bigcup_{e \leq 27}W^1_e(C))<23$. Arguing as in Case I, we conclude that there 
 exists a base point free line bundle of degree $28$ on $C$ with exactly two global sections.
\end{proof}

Now we are ready to prove
\begin{Theorem}\label{T4.1}
 With the notations as in the beginning of this section, $B^1_2-B^2_2 \subset \M(c_2)$ 
 is non-empty for $28 \leq c_2 \leq 32$.
\end{Theorem}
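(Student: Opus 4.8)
The plan is to sharpen the construction of Proposition \ref{P4.1} so that the bundle it produces has exactly two global sections, and hence lands in the open stratum $B^1_2 - B^2_2$ rather than merely in $B^1_2$. Since $\mathrm{Support}(B^k_2) = \{E : h^0(X,E) \ge k+1\}$, we have $B^1_2 - B^2_2 = \{E \in \M(c_2) : h^0(X,E)=2\}$, so it suffices to exhibit one stable bundle with $c_1 = 3H$, $c_2(E)=c_2$, and exactly two global sections. I would start from a base point free line bundle $A$ on $C$ with $h^0(C,A)=2$ and $\deg A = c_2$ (these exist by Proposition \ref{P4.2}, and $c_2 \ge 16$), form $F$ via the elementary transformation (\ref{eqn4.1}), and set $E = F^*$. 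By Proposition \ref{P4.1}, $F$ (hence $F^*$) is $H$-stable with $c_1(F^*)=3H$, $c_2(F^*)=\deg A = c_2$, so $F^* \in \M(c_2)$, and the dualized sequence (\ref{eqn4.2}) gives $h^0(X,F^*) = 2 + h^0(C, \mathcal{O}_C(C)\otimes A^*)$. As $C \in |3H|$ we have $\mathcal{O}_C(C) \cong \mathcal{O}_C(3)$, of degree $C\cdot C = 9H^2 = 45$. Thus the whole problem reduces to choosing $A$ so that, in addition, $h^0(C, \mathcal{O}_C(3)\otimes A^*)=0$.

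The key step is a dimension count on $\mathrm{Pic}^{c_2}(C)$. The involution $\iota\colon A \mapsto \mathcal{O}_C(3)\otimes A^*$ is an isomorphism $\mathrm{Pic}^{c_2}(C) \xrightarrow{\sim} \mathrm{Pic}^{45-c_2}(C)$, and the ``bad'' locus $B := \{A : h^0(C,\mathcal{O}_C(3)\otimes A^*) \ge 1\}$ is exactly $\iota^{-1}\big(W^0_{45-c_2}(C)\big)$. For $28 \le c_2 \le 32$ we have $0 < 45 - c_2 \le 17 < g = 31$, so $\dim B = \dim W^0_{45-c_2}(C) = 45 - c_2$. On the other hand, Proposition \ref{P4.2} produces a base point free $A_0$ with exactly two sections; let $Z$ be an irreducible component of $W^1_{c_2}(C)$ containing $A_0$. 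The Brill-Noether lower bound, valid on any curve, gives $\dim Z \ge 2c_2 - 33$. For $28 \le c_2 \le 32$ the inequality $2c_2 - 33 > 45 - c_2$, i.e.\ $c_2 > 26$, holds, so $\dim Z > \dim B$ and in particular $Z \not\subset B$.

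I would then conclude as follows. On the open subset $Z \setminus W^2_{c_2}(C)$, where $h^0 = 2$, base point freeness is an open condition that holds at $A_0$; since $Z$ is irreducible, the base point free line bundles with exactly two sections form a dense open subset of $Z$. As $B$ is closed and $Z \not\subset B$, the set $Z \setminus B$ is also dense open, so their intersection is nonempty. This yields a base point free $A$ with $h^0(C,A)=2$ and $h^0(C,\mathcal{O}_C(3)\otimes A^*)=0$. For this $A$, the bundle $F^* \in \M(c_2)$ satisfies $h^0(X,F^*)=2$, whence $F^* \in B^1_2 - B^2_2$, proving non-emptiness.

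The main obstacle is the dimension estimate for the good locus: one must ensure that the base point free, two-section line bundles form a family of dimension strictly larger than $\dim B = 45 - c_2$. This is where the hypothesis $c_2 \ge 28$ is essential, as it is precisely the range (together with Proposition \ref{P4.2}) in which $2c_2 - 33 > 45 - c_2$, so that a generic member of the relevant component of $W^1_{c_2}(C)$ can be chosen simultaneously base point free, with exactly two sections, and outside the locus where $\mathcal{O}_C(3)\otimes A^*$ acquires a section. For the boundary case $c_2 = 30$, where Proposition \ref{P4.2} only asserts existence, I would rely on the universal inequality $\dim Z \ge 2c_2 - 33$ rather than on an explicit family, which already suffices for the argument.
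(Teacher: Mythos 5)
Your proposal is correct and follows essentially the same route as the paper: both construct $E = F^*$ from a base point free $A$ on $C$ with $h^0(C,A)=2$ via the elementary transformation of Proposition \ref{P4.1}, and reduce the theorem to finding such an $A$ which additionally satisfies $h^0(C,\mathcal{O}_C(C)\otimes A^*)=0$, so that (\ref{eqn4.2}) forces $h^0(X,F^*)=2$ exactly. The only difference is that the paper merely asserts the existence of such an $A$, whereas your dimension count (the bad locus $\iota^{-1}(W^0_{45-c_2}(C))$ has dimension $45-c_2$, strictly less than the Brill--Noether bound $2c_2-33$ valid for the component $Z \ni A_0$ of $W^1_{c_2}(C)$ when $c_2 \geq 28$) supplies the justification the paper leaves implicit.
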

\begin{proof}
 Combining Propositions \ref{P4.1} and \ref{P4.2}, we see that, for $28 \leq c_2 \leq 32$, 
 $B^1_2 \subset \M(c_2)$ is non-empty. Moreover, for $28 \leq d \leq 32$, we can find 
 base point free line bundles $A$ on $C$ with $h^0(C,A)=2$ and $h^0(C,\mathcal{O}_C(C)\otimes A^*)=0$. 
 Now using (\ref{eqn4.2}), we conclude.
\end{proof}

\begin{Proposition}\label{P1}\label{P4.3}
 With the notations as in the beginning of this section, let $E \in B_2^1-B_2^2 
 \subset \M(c_2), 28 \leq c_2 \leq 32$ 
 be a general element constructed as in Proposition \ref{P4.1}. Then $E$ is a 
 smooth point in $\M(c_2)$.
\end{Proposition}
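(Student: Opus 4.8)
The plan is to test smoothness of $\M(c_2)$ at $E$ through its obstruction space. Since $E$ is stable, hence simple, the trace splitting gives $\mathrm{Ext}^i(E,E)=\mathrm{Ext}^i(E,E)_0\oplus H^i(X,\mathcal O_X)$, and $E$ is a smooth point of $\M(c_2)$ if and only if the trace-free obstruction $\mathrm{Ext}^2(E,E)_0$ vanishes. The trace map $\mathrm{Ext}^2(E,E)\to H^2(X,\mathcal O_X)$ is surjective, because its Serre dual is the injection $H^0(X,K_X)\to \mathrm{Hom}(E,E\otimes K_X)$, $\omega\mapsto \omega\cdot\mathrm{id}_E$. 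Hence $\dim\mathrm{Ext}^2(E,E)_0=\dim\mathrm{Ext}^2(E,E)-h^2(X,\mathcal O_X)$, and since $K_X=\mathcal O_X(1)$ and $\chi(X,\mathcal O_X)=5$ give $h^2(X,\mathcal O_X)=h^0(X,K_X)=4$, smoothness at $E$ is equivalent to
$$
\hom(E,E\otimes K_X)=h^0(X,\mathcal{E}nd(E)\otimes K_X)=4,
$$
the first identity being Serre duality together with the self-duality of $\mathcal{E}nd(E)$. Equivalently, since the scalar homomorphisms $\{\omega\cdot\mathrm{id}_E:\omega\in H^0(X,K_X)\}$ already span a $4$-dimensional subspace, it suffices to prove that there is no nonzero trace-free homomorphism $\phi\colon E\to E\otimes K_X$, i.e. $H^0(X,\mathcal{E}nd_0(E)\otimes K_X)=0$.

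Next I would compute $h^0(X,\mathcal{E}nd(E)\otimes K_X)$ directly from the construction of $E$. Writing $E=F^*$ and using $K_X=\mathcal O_X(1)$, $E^*=F$, one has $\mathcal{E}nd(E)\otimes K_X\cong E\otimes F(1)$. Tensoring the defining sequence (\ref{eqn4.2}),
$$
0\to \mathcal O_X^{\oplus 2}\to E\to L\to 0,\qquad L:=\mathcal O_C(C)\otimes A^*,
$$
by $F(1)$ reduces the computation to $H^\bullet(X,F(1))$ together with $H^\bullet(C,L\otimes F(1))$, the latter being the cohomology of a line bundle on the curve $C$. In turn, tensoring (\ref{eqn4.1}) by $\mathcal O_X(1)$ expresses $H^\bullet(X,F(1))$ through the multiplication map $H^0(X,\mathcal O_X(1))\otimes H^0(C,A)\to H^0(C,A\otimes\mathcal O_C(H))$ and the line bundles $A$ and $A\otimes\mathcal O_C(H)$ on $C$. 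All the relevant degrees lie in the range controlled in Propositions \ref{P4.1}--\ref{P4.2}, and the special cohomologies ($h^1(C,A)$, $h^0(C,L)$, and so on) are pinned down by $h^0(C,A)=2$, Riemann--Roch on $C$, and the Martens/Mumford-type bounds already invoked there. The aim is to show that, after accounting for the connecting maps, the total is exactly $4$.

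The hard part will be controlling these connecting and multiplication maps so that no extra homomorphisms survive beyond the scalar ones, that is, proving the trace-free vanishing $H^0(X,\mathcal{E}nd_0(E)\otimes K_X)=0$. Here I expect to use the genericity of $E$ in two ways. Reading $\phi$ as a twisted endomorphism of the stable bundle $E$, the characteristic polynomial gives $\phi^2=-\det(\phi)\cdot\mathrm{id}_E$ with $\det(\phi)\in H^0(X,\mathcal O_X(2))$ whenever $\mathrm{tr}(\phi)=0$. If $\det(\phi)=0$ then $\phi$ is nilpotent and $\mathrm{im}(\phi)$ is a rank-one subsheaf of $E\otimes K_X$; comparing its $c_1$ with the slopes of $E$ and $E\otimes K_X$ forces a single numerical type of destabilizing subbundle, which I would exclude using the explicit extension (\ref{eqn4.2}) and the hypothesis $E\in B_2^1-B_2^2$ (so that $h^0(X,E)=2$). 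If $\det(\phi)\neq 0$, the eigen-decomposition of $\phi$ produces a double cover of $X$ branched along $\{\det(\phi)=0\}\in|2H|$ with $E$ a direct image from it, which should contradict stability or the numerical invariants $(c_1,c_2)$ of $E$. Genericity of the line bundle $A$, and hence of $E$, is what I expect to make both the multiplication-map estimate and this twisted-endomorphism dichotomy go through; isolating exactly which genericity is required is the main obstacle.
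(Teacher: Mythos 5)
Your reduction of smoothness at $E$ to the vanishing of trace-free twisted endomorphisms, $H^0(X,\mathcal{E}nd_0(E)\otimes K_X)=0$, is exactly the paper's starting point (the paper works with $F=E\otimes\mathcal{O}_X(-1)$ and the condition $H^0(X,AdF\otimes K_X)\neq 0$, which is the same thing), and your dichotomy $\det\phi=0$ versus $\det\phi\neq 0$ is precisely the Mestrano--Simpson dichotomy of singularities of the first and second kind that the paper invokes. Your treatment of the nilpotent branch is also in the same spirit as the paper's: there the paper simply cites \cite[Proposition 5.1]{Simpson2}, which says that a $\phi$ dropping rank everywhere forces $h^0(X,F)\neq 0$, and this is excluded by the construction.

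The genuine gap is in the branch $\det\phi\neq 0$. There you hope that the spectral double cover of $X$ branched along $\{\det\phi=0\}\in|2K_X|$ will ``contradict stability or the numerical invariants.'' No such pointwise contradiction is available: direct images of rank-one sheaves from such double covers can perfectly well be $H$-stable with the given Chern classes, and the existence of obstructed stable bundles of exactly this type on a very general quintic is the subject of \cite{Simpson2}; this is precisely why the proposition only asserts smoothness for a \emph{general} member of the constructed family rather than for every member. The missing idea --- the one you flag yourself as ``the main obstacle'' --- is how genericity enters: the paper uses the dimension bound of \cite[Lemma 10.1]{Simpson1}, which says that the locus in the moduli space of bundles admitting a singularity of the second kind has dimension at most $8$, and compares this with the dimension of the family of bundles produced by Proposition \ref{P4.1} (parametrized by the base-point-free line bundles $A$ on $C$ with two sections), which is greater than $8$. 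Hence a general constructed $E$ lies outside the second-kind locus, and together with the pointwise exclusion of the first kind this yields smoothness. Without this dimension count (or some substitute for it) your argument cannot close; for the same reason, your alternative plan of computing $h^0(X,\mathcal{E}nd(E)\otimes K_X)=4$ directly from the exact sequences cannot succeed uniformly in $A$, since the relevant cohomology groups and connecting maps are not determined by the numerical invariants alone and genuinely jump on special loci of the family.
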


\begin{proof}
 Since the morphism $\M_{X,H}(2; 3H, c_2) \xrightarrow{\otimes \mathcal{O}_X(-1)} 
 \M_{X,H}(2; H, c_2-10)$ is an isomorphism, it is sufficient to prove that 
 $E \otimes \mathcal{O}_X(-1)$ is a smooth point of $\M_{X,H}(2; H, c_2-10)$. 
 Set $F:= E \otimes \mathcal{O}_X(-1)$. If $F$ is not a smooth point of 
 $\M_{X,H}(2; H, c_2-10)$, we have $H^0(X, AdF \otimes K_X) \neq 0$. Thus there 
 is a non-zero map $\phi: F \to F \otimes K_X$. Two cases can occur:\\
 Case I: $\phi$ drops rank every where.\\
 Case II: $\phi$ does not drop rank every where.\\
 Following \cite{Simpson2}, we call the Case I as singularity of first kind, and 
 the Case II as singularity of second kind. If Case I occurs, we will have 
 $h^0(X, F) \neq 0$ (see \cite[Proposition 5.1]{Simpson2}), which is impossible. Also, from 
 \cite[Lemma 10.1]{Simpson1}, we see that the dimension of the singular locus of the second 
 kind is at most $8$ \footnotemark. 
\footnotetext{In \cite[Corollary 5.1]{Simpson2}, the authors have estimated the bound for 
singularity of second kind as $\leq 13$. But in \cite[Lemma 10.1]{Simpson1}, the authors have 
improved the above bound and shown that it is $\leq 8$}
 But the dimension of the family of globally generated line bundles $A$ 
 on $C$ with $28 \leq \text{deg}(A) \leq 32, h^0(C,A)=2$ and $h^0(C, \mathcal{O}_C(C) 
 \otimes A^*)=0$ is greater than $8$. Hence a general $E$, as constructed in 
 Proposition \ref{P4.1}, will be a smooth point in $\M_{X,H}(2; 3H, c_2)$.
\end{proof}

Let $E \in B_2^1-B_2^2$ be a smooth point in $\M(c_2), 28 \leq c_2 \leq 32,$ as constructed 
in Proposition \ref{P4.1}. Existence of such a smooth point is assured by Proposition \ref{P4.3}. 
Then $E$ fits into the following exact sequence
\begin{equation}\label{eqn3}
 0 \to \mathcal{O}_X^{\oplus 2} \to E \to \mathcal{O}_C(C) \otimes A^* \to 0
\end{equation}
(see equation (\ref{eqn4.2})). Tensoring this exact sequence by $E^* \otimes K_X$ we get 
\begin{equation}\label{eqn4}
 0 \to (E^*\otimes K_X)^{\oplus 2} \to \mathcal{E}nd(E)\otimes K_X \to 
  \mathcal{O}_C(C) \otimes A^* \otimes (E^*\otimes K_X)|_{_C} \to 0.
\end{equation}
Note that $h^0(X, E^* \otimes K_X)=h^2(X,E)=0$, by our assumption, and
since $E$ is a smooth point in the moduli space, $H^0(X, \mathcal{E}nd(E)\otimes K_X) 
\simeq H^0(X, K_X)$. 
Thus, by taking cohomology long exact sequence corresponding to the exact sequence 
(\ref{eqn4}), we get
\begin{align}\label{align1}
 \begin{split}
  0 &\to H^0(X,K_X) \to H^0(C,\mathcal{O}_C(C) \otimes A^* \otimes (E^*\otimes K_X)|_{_C})\\ 
 &\to H^1(X, (E^*\otimes K_X)^{\oplus 2}) \xrightarrow{\eta} 
 H^1(X, \mathcal{E}nd(E)\otimes K_X) \to \dots
 \end{split}
\end{align}
Note that the Petri map $\mu$ in (\ref{eqn 1}) is same as the map $\eta$ above. 
Now we will show that the map $\eta$ is injective, and this will in turn imply that the bundle 
$E$ is a smooth point in $B_2^1$.

Dualizing the exact sequence (\ref{eqn3}) and restricting to 
the curve $C$, we obtain the exact sequence (on $C$)
$$
0 \to \mathcal{O}_C(-C)\otimes A \to E^*|_{_C} \to \mathcal{O}_C^{\oplus 2} 
\to A \to 0
$$
which induces the following short exact sequence (on $C$)
\begin{equation}\label{eqn5}
 0 \to \mathcal{O}_C(-C)\otimes A \to E^*|_{_C} \to A^* \to 0.
\end{equation}
Tensoring the sequence (\ref{eqn5}) by $\mathcal{O}_C(C) \otimes A^* \otimes K_X|_{_C}$ 
and taking cohomology long exact sequence we get
\begin{align*}
 0 &\to H^0(C, K_X|_{_C}) \to H^0(C, \mathcal{O}_C(C) \otimes A^* \otimes 
 (E^*\otimes K_X)|_{_C})\\
 &\to H^0(C, \mathcal{O}_C(C) \otimes A^{*\otimes 2} \otimes K_X|_{_C}) \to \dots 
\end{align*}
By our assumption on $A$, $H^0(C, \mathcal{O}_C(C)\otimes A^*)=0$. Since 
$\text{degree}(A^*\otimes K_X|_{_C})<0$, we have $H^0(C, \mathcal{O}_C(C) \otimes 
A^{*\otimes 2} \otimes K_X|_{_C})=0$. Thus from the above exact sequence 
$H^0(C, K_X|_{_C}) \simeq H^0(C, \mathcal{O}_C(C) \otimes A^* \otimes 
(E^*\otimes K_X)|_{_C}).$ It is easy to see that $H^0(X,K_X) \simeq H^0(C, K_X|_{_C})$. 
Consequently, the map $\eta$ in (\ref{align1}) is injective.\\
We summarize the above discussion as
\begin{Theorem}\label{T3}
 $B_2^1 \subset \M(c_2), 28 \leq c_2 \leq 32,$ contains a smooth point, and hence the 
 irreducible component containing it has the expected dimension.
\end{Theorem}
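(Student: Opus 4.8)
The plan is to apply the smoothness criterion recorded in the Remark following~(\ref{eqn 1}): at a point $E\in B_2^1-B_2^2$ that is already a smooth point of $\M(c_2)$, injectivity of the Petri map $\mu$ forces $E$ to be a smooth point of $B_2^1$, so that the irreducible component of $B_2^1$ through $E$ has the expected dimension. Thus the theorem reduces to exhibiting one such $E$ and verifying that its Petri map is injective.

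First I would fix the bundle $E\in B_2^1-B_2^2$ produced by Proposition~\ref{P4.1} from a base-point-free line bundle $A$ on $C$ with $h^0(C,A)=2$, $h^0(C,\mathcal{O}_C(C)\otimes A^*)=0$, and $28\le\deg(A)\le 32$; such $A$ exist by Proposition~\ref{P4.2} (and Theorem~\ref{T4.1}). By Proposition~\ref{P4.3}, a general such $E$ is a smooth point of $\M(c_2)$, so the hypothesis of the criterion is met. Next I would make the Petri map explicit: tensoring the defining sequence~(\ref{eqn3}) by $E^*\otimes K_X$ yields~(\ref{eqn4}), whose long exact cohomology sequence~(\ref{align1}) contains the map $\eta$, and one checks that $\eta$ coincides with the cup-product $\mu$ of~(\ref{eqn 1}).

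The heart of the argument is the injectivity of $\eta$. By exactness of~(\ref{align1}), this is equivalent to surjectivity of the first map
$$
H^0(X,K_X)\longrightarrow H^0\bigl(C,\mathcal{O}_C(C)\otimes A^*\otimes(E^*\otimes K_X)|_{_C}\bigr),
$$
which, being injective, would then be an isomorphism. To see this I would restrict the dual of~(\ref{eqn3}) to $C$, obtaining the sequence~(\ref{eqn5}), and tensor it by $\mathcal{O}_C(C)\otimes A^*\otimes K_X|_{_C}$. Passing to cohomology, the sub-bundle contributes $H^0(C,K_X|_{_C})$, the middle term is exactly the target above, and the quotient contributes $H^0(C,\mathcal{O}_C(C)\otimes A^{*\otimes 2}\otimes K_X|_{_C})$. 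Once this last group vanishes, the target is identified with $H^0(C,K_X|_{_C})\simeq H^0(X,K_X)$, forcing the displayed map to be an isomorphism and hence $\eta=\mu$ to be injective.

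The step I expect to be the main obstacle is precisely the pair of vanishings on $C$ that make this identification work: $H^0(C,\mathcal{O}_C(C)\otimes A^*)=0$ is guaranteed by the choice of $A$, but the vanishing of $H^0(C,\mathcal{O}_C(C)\otimes A^{*\otimes 2}\otimes K_X|_{_C})$ and the isomorphism $H^0(X,K_X)\simeq H^0(C,K_X|_{_C})$ depend on the degree bookkeeping on $C$ (using $\deg(A^*\otimes K_X|_{_C})<0$ and $g=31$). Controlling these simultaneously is what pins the admissible range to $28\le c_2\le 32$, and it is where the careful construction of $A$ in Propositions~\ref{P4.1}--\ref{P4.2} is indispensable.
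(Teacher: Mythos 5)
Your proposal is correct and follows essentially the same route as the paper: reduce to the Petri-map criterion at the smooth point $E$ of $\M(c_2)$ furnished by Propositions~\ref{P4.1}--\ref{P4.3}, identify the Petri map with the connecting map $\eta$ in the cohomology sequence~(\ref{align1}) of~(\ref{eqn4}), and prove its injectivity by tensoring the restricted dual sequence~(\ref{eqn5}) with $\mathcal{O}_C(C)\otimes A^*\otimes K_X|_{_C}$ and invoking the two vanishings $H^0(C,\mathcal{O}_C(C)\otimes A^*)=0$ and $H^0(C,\mathcal{O}_C(C)\otimes A^{*\otimes 2}\otimes K_X|_{_C})=0$ together with $H^0(X,K_X)\simeq H^0(C,K_X|_{_C})$. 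This is exactly the paper's argument, down to the same exact sequences and dimension count.
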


\section{Non-emptiness of $B_2^0$}

Let $X$ and $\M(c_2)$ be as in the previous section. Let $\text{Hilb}^{c_2}(X)^{\text{lci}}$ 
denotes the open subscheme of the Hilbert scheme $\text{Hilb}^{c_2}(X)$ consisting of 
length $c_2$ subschemes of $X$ which are locally complete intersections. Given 
any point $Z \in \text{Hilb}^{c_2}(X)^{\text{lci}}$, we have an exact sequence
$$
0 \to \mathcal{O}_X \to E \to \mathcal{I}_Z\otimes \mathcal{O}_X(3) \to 0
$$
where $\mathcal{I}_Z$ is the ideal sheaf of $Z$ and
$E$ is a rank two torsion-free sheaf on $X$. The space of such (isomorphic classes of) 
extensions is parametrized by $\mathbb{P}\text{Ext}^1(\mathcal{I}_Z \otimes \mathcal{O}_X(3), 
\mathcal{O}_X)$. By duality, $\text{Ext}^1(\mathcal{I}_Z \otimes \mathcal{O}_X(3), 
\mathcal{O}_X) = H^1(X, \mathcal{I}_Z \otimes \mathcal{O}_X(4))^*$. Since $h^0(X, 
\mathcal{O}_X(4))=35$, for a general element of length $c_2 \geq 36$ in 
$\text{Hilb}^{c_2}(X)^{\text{lci}}$, we have $h^0(X, \mathcal{I}_Z \otimes 
\mathcal{O}_X(4))=0$. Thus a general element in $\text{Hilb}^{c_2}(X)^{\text{lci}}, 
c_2 \geq 36$, satisfies the Cayley-Bacharach property for $\mathcal{O}_X(4)$, and 
consequently, we get that the corresponding extension $E$ is locally free. 
Also any such vector bundle $E$ is $H$-stable.  
Thus for $c_2 \geq 36, B_2^0 \subset \M(c_2)$ is non-empty. 

The following two results give a bound for $\text{dim}B_2^0$.
\begin{Lemma}(\cite[Proposition 1.1]{Nijsse}\footnotemark)
 With the notations as above, for $c_2 \geq 36, \text{dim}B_2^0 
 \leq 3c_2 - 36$.
\end{Lemma}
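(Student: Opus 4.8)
The target $3c_2-36$ is exactly the expected dimension $\rho_2^0$: one computes $\chi(E)=25-c_2$, so $\rho_2^0=\dim\M(c_2)-(1-\chi(E))=(4c_2-60)-(c_2-24)=3c_2-36$. Thus the claim is that $B_2^0$ meets its expected dimension from above. The plan is to parametrize $B_2^0$ by the Serre construction and bound the dimension of the parameter space. Every $E\in B_2^0$ has a nonzero section $s$; writing its zero scheme as $D+Z$ with $D$ the divisorial part and $Z$ of codimension two, stability of $E$ (with $\mu(E)=\tfrac{15}{2}$, and $\mathrm{Pic}(X)=\mathbb{Z}H$, $H^2=5$) forces $D\cdot H\le 7$, hence $D=0$ or $D=H$. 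I would treat these two cases separately, the case $D=0$ being the main one.

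When $D=0$ the section gives $0\to\mathcal{O}_X\to E\to\mathcal{I}_Z(3)\to 0$ with $Z\in\mathrm{Hilb}^{c_2}(X)$, and the isomorphism classes of such extensions form a $\mathbb{P}\mathrm{Ext}^1(\mathcal{I}_Z(3),\mathcal{O}_X)$. Since $K_X=\mathcal{O}_X(1)$, Serre duality gives
$$\mathrm{Ext}^1(\mathcal{I}_Z(3),\mathcal{O}_X)\cong H^1(X,\mathcal{I}_Z(4))^*.$$
So $(Z,\xi)\mapsto E$ is a dominant map onto the part of $B_2^0$ coming from sections with zero-dimensional zero scheme, and it suffices to bound the source. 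From $0\to\mathcal{I}_Z(4)\to\mathcal{O}_X(4)\to\mathcal{O}_Z\to 0$, together with $h^0(\mathcal{O}_X(4))=35$ and $h^1(\mathcal{O}_X(4))=0$, one gets $h^1(\mathcal{I}_Z(4))=c_2-35+h^0(\mathcal{I}_Z(4))$. For general $Z$, where $h^0(\mathcal{I}_Z(4))=0$, the fibre $\mathbb{P}\mathrm{Ext}^1$ has dimension $c_2-36$, so this open stratum contributes at most $\dim\mathrm{Hilb}^{c_2}(X)+(c_2-36)=2c_2+(c_2-36)=3c_2-36$; subtracting the nonnegative fibre dimension of $(Z,\xi)\mapsto E$ keeps the image within this bound.

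The main obstacle is the \emph{jumping locus}, where $h^0(\mathcal{I}_Z(4))=k\ge 1$ and $\dim\mathbb{P}\mathrm{Ext}^1=k+c_2-36$ exceeds the generic value. Here I would bound $d_k:=\dim\{Z:h^0(\mathcal{I}_Z(4))=k\}$ by an incidence argument: such $Z$ lie on a curve $C'\in|4H|=\mathbb{P}^{34}$, a local complete intersection curve of arithmetic genus $51$, whose $\mathrm{Hilb}^{c_2}(C')$ has dimension $c_2$; projecting the incidence $\{(Z,C'):Z\subset C'\}$ and subtracting the fibre $\mathbb{P}^{k-1}$ over $Z$ yields $d_k\le(34+c_2)-(k-1)=c_2+35-k$. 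Hence this stratum contributes at most $(c_2+35-k)+(k+c_2-36)=2c_2-1$, which is $\le 3c_2-36$ once $c_2\ge 35$.

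Finally, for the divisorial case $D=H$ the section yields $0\to\mathcal{O}_X(1)\to E\to\mathcal{I}_W(2)\to 0$ with $W\in\mathrm{Hilb}^{c_2-10}(X)$ and $\mathrm{Ext}^1(\mathcal{I}_W(2),\mathcal{O}_X(1))\cong H^1(X,\mathcal{I}_W(2))^*$; the analogous computation (using $h^0(\mathcal{O}_X(2))=10$) shows this family has dimension at most $2(c_2-10)+(c_2-21)=3c_2-41<3c_2-36$. Combining the three contributions, every component of $B_2^0$ has dimension at most $3c_2-36$. I expect the jumping-locus estimate $d_k\le c_2+35-k$ to be the delicate point, since it must hold uniformly in $k$ and relies on the uniform bound $\dim\mathrm{Hilb}^{c_2}(C')=c_2$ for the possibly singular or reducible complete-intersection curves $C'\in|4H|$ (valid because they have planar singularities); the remainder is bookkeeping with Riemann--Roch and Serre duality.
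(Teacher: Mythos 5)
Your proof is correct and follows essentially the same route as the paper, whose own ``proof'' is just the citation of \cite[Proposition 1.1]{Nijsse} (the case $c_1=H$, bound $3c_2-11$ for $c_2\ge 10$) together with a footnote asserting that the same calculation gives $3c_2-36$ here: that calculation is precisely your Serre-construction count, i.e.\ writing $E$ as an extension determined by the zero scheme of a section, bounding $\dim\mathbb{P}\,\mathrm{Ext}^1$ via $h^1(X,\mathcal{I}_Z(4))$, and controlling the jumping locus by an incidence argument over $|4H|$. Your two refinements --- handling the divisorial case $D=H$, which genuinely arises for $c_1=3H$ though not for $c_1=H$, and invoking $\dim\mathrm{Hilb}^n(C')\le n$ for arbitrary (possibly non-reduced or reducible) members $C'\in|4H|$, which holds via Brian\c{c}on's bound on punctual Hilbert schemes of a smooth surface --- are exactly the points the paper's footnote glosses over, and you handle both correctly.
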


\footnotetext{In \cite{Nijsse}, Proposition $1.1$ was proved with the assumption 
that $c_1(E)=H$, and the bound author got there is $\text{dim}B_2^0 \leq 3c_2 -11$, 
for $c_2 \geq 10$. The same calculation, in this case, gives us the stated bound}

\begin{Lemma}(\cite[Corollary 3.1]{Simpson2})
 With the notations as above, every irreducible component of  $B_2^0$ 
 has dimension $\geq 3c_2 - h^0(X, \mathcal{O}_X(3) \otimes K_X) -1.$
\end{Lemma}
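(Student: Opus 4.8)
The plan is to identify $B_2^0$ with the $k=0$ Brill--Noether determinantal locus of Theorem \ref{T1} and to extract the stated bound from the general determinantal codimension estimate, translating the resulting expected dimension into the form involving $h^0(X,\mathcal{O}_X(3)\otimes K_X)$ by Riemann--Roch. The reason to prefer this route is that the lemma asserts a lower bound on \emph{every} irreducible component, and the classical fact that a rank-degeneracy locus of a morphism of vector bundles has codimension at most the expected one is already a statement about each component of the locus; this is exactly what Theorem \ref{T1} packages.

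First I would verify the hypotheses. For $c_2 \ge 36$ one has $H^2(X,E)=0$ for every $E \in \M(c_2)$, so Theorem \ref{T1} applies with $r=2$, $c_1=3H$ and $k=0$ and yields that each component of $B_2^0$ has dimension at least $\rho_2^0 = \dim \M(c_2) - (1-\chi(E))$. Here $\dim \M(c_2)=4c_2-60$ by \cite{Simpson1}, which is legitimate in the range $c_2 \ge 36 > 21$ where $\M(c_2)$ is irreducible and generically smooth of the expected dimension. Next I would compute $\chi(E)$ by Riemann--Roch on the quintic surface, using $H^2=5$, $K_X=\mathcal{O}_X(1)$ and $\chi(\mathcal{O}_X)=5$; this gives $\chi(E)=25-c_2$. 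Substituting, $\rho_2^0 = (4c_2-60)-(1-(25-c_2)) = 3c_2-36$. It remains only to match the constant: since $\mathcal{O}_X(3)\otimes K_X \simeq \mathcal{O}_X(4)$ and $h^0(X,\mathcal{O}_X(4))=35$, we have $3c_2-36 = 3c_2 - h^0(X,\mathcal{O}_X(3)\otimes K_X)-1$, which is the claimed inequality.

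The precise shape of the bound is most transparent from the Serre-correspondence picture, and this is presumably the route of \cite[Corollary 3.1]{Simpson2}: every $E\in B_2^0$ sits in a sequence $0\to\mathcal{O}_X\to E\to \mathcal{I}_Z(3)\to 0$ with $Z$ a length-$c_2$ local complete intersection, so $B_2^0$ is dominated by the family of pairs $(Z,[e])$ over $\mathrm{Hilb}^{c_2}(X)^{\mathrm{lci}}$, whose local dimension is $2c_2+\dim\mathbb{P}\mathrm{Ext}^1(\mathcal{I}_Z(3),\mathcal{O}_X)$. Serre duality identifies $\mathrm{Ext}^1(\mathcal{I}_Z(3),\mathcal{O}_X)$ with $H^1(X,\mathcal{I}_Z(3)\otimes K_X)^*$, and the structure sequence of $Z$ (together with $h^1(\mathcal{O}_X(4))=0$) computes its dimension as $c_2-h^0(X,\mathcal{O}_X(3)\otimes K_X)+h^0(X,\mathcal{I}_Z(3)\otimes K_X)$; subtracting the fibre dimension $h^0(X,E)-1=h^0(X,\mathcal{I}_Z(3))$ of the map to $B_2^0$ produces exactly $3c_2-h^0(X,\mathcal{O}_X(3)\otimes K_X)-1$, up to the correction $h^0(X,\mathcal{I}_Z(3)\otimes K_X)-h^0(X,\mathcal{I}_Z(3))$, which is $\ge 0$ because multiplication by a hyperplane embeds $\mathcal{I}_Z(3)\hookrightarrow\mathcal{I}_Z(4)$.

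The main obstacle lies exactly at this last point. The Serre-correspondence count only controls, a priori, those components of $B_2^0$ dominated by families surjecting onto the Hilbert scheme, since the rank of $\mathrm{Ext}^1$ and the section space $h^0(X,E)$ both jump on closed loci, and a naive fibre-dimension estimate cannot by itself exclude small ``extra'' components sitting over such jump loci. Guaranteeing the inequality on \emph{every} component is what forces one back to the determinantal argument of Theorem \ref{T1}, where the codimension bound holds component by component by construction; for that reason I would present the Theorem \ref{T1} derivation as the actual proof and use the extension count only to exhibit the bound in the stated form.
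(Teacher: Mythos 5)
Your determinantal argument is correct, but it is genuinely different from what the paper does: the paper contains no proof of this lemma at all---it is imported verbatim as \cite[Corollary 3.1]{Simpson2}, where Mestrano and Simpson obtain the bound by a dimension count for bundles equipped with a section (in spirit, your Serre-correspondence sketch carried out carefully). Your main argument, by contrast, is a complete proof internal to the paper's own machinery, and all its ingredients check out: $H^2(X,E)\cong H^0(X,E^*\otimes K_X)^*=0$ for every $E\in\M(c_2)$ (already noted in the paper; $E^*\otimes K_X\simeq E(-2)$, and a nonzero section of $E(-2)$ would give $\mathcal{O}_X(2)\hookrightarrow E$ with $\mu=10>15/2$, contradicting stability); $\chi(E)=25-c_2$ by Riemann--Roch; $\dim\M(c_2)=4c_2-60$ by \cite{Simpson1} in the relevant range; hence Theorem \ref{T1} with $k=0$ bounds every nonempty component below by $\rho_2^0=(4c_2-60)-\bigl(1-(25-c_2)\bigr)=3c_2-36=3c_2-h^0(X,\mathcal{O}_X(4))-1$. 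Your route buys self-containedness, and it makes the paper's subsequent observation---that the cited lower bound coincides with the Brill--Noether number $\rho_2^0$---true by construction rather than by comparing two separately computed numbers; what the paper's citation buys is a bound established independently of the Brill--Noether formalism of \cite{CM1}, stated in exactly the form used.

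Two cautions on your supplementary sketch, which you rightly demote to motivation. The gap you flag (controlling \emph{every} component, not just those dominating the generic stratum) is real. But there is a second gap you do not flag: a section of $E\in B_2^0$ need not vanish in codimension two. Stability only forbids sub-line bundles $\mathcal{O}_X(m)\hookrightarrow E$ with $m\geq 2$, since the requirement $\mu(\mathcal{O}_X(m))=5m<\mu(E)=15/2$ still permits $m=1$; thus the zero scheme of a section may contain a divisor in $|H|$, and such $E$ do not arise as extensions $0\to\mathcal{O}_X\to E\to\mathcal{I}_Z(3)\to 0$ with $Z$ finite. A proof along those lines would additionally have to stratify by the divisorial part of the vanishing locus. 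Neither caution affects your actual proof via Theorem \ref{T1}.
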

Since $h^0(X, \mathcal{O}_X(3) \otimes K_X)=35,$ we see that, for $c_2 \geq 36$, 
every irreducible component of $B_2^0$ has dimension exactly $3c_2 -36$. On the 
other hand, the Brill-Noether number $\rho_2^0 = 3c_2 -36$. We summarize the 
above discussion as 
\begin{Proposition}
 With the notations as above, $B_2^0 \subset \M(c_2)$ is non-empty for 
 $c_2 \geq 36$, and every irreducible component of $B_2^0$ has the expected 
 dimension.
\end{Proposition}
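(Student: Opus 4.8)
The plan is to separate the statement into its two halves — non-emptiness of $B_2^0$ and the dimension of its components — and to observe that all the necessary ingredients have already been assembled above, so that the proof amounts to combining them correctly.

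For non-emptiness I would argue via the Serre-type construction set up just before the statement. Starting from a general $Z \in \text{Hilb}^{c_2}(X)^{\text{lci}}$ with $c_2 \geq 36$, I consider an extension
$$
0 \to \mathcal{O}_X \to E \to \mathcal{I}_Z \otimes \mathcal{O}_X(3) \to 0,
$$
classified by $\text{Ext}^1(\mathcal{I}_Z \otimes \mathcal{O}_X(3), \mathcal{O}_X) \cong H^1(X, \mathcal{I}_Z \otimes \mathcal{O}_X(4))^*$ via Serre duality, using $K_X \simeq \mathcal{O}_X(1)$. The essential point is that such an $E$ is locally free precisely when $Z$ satisfies the Cayley--Bacharach property with respect to $\mathcal{O}_X(4) = K_X \otimes \mathcal{O}_X(3)$. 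Since $h^0(X, \mathcal{O}_X(4)) = 35$ and $c_2 \geq 36$, a general $Z$ (together with each of its colength-one subschemes, whose length $c_2 - 1 \geq 35$) imposes independent conditions on $|\mathcal{O}_X(4)|$, forcing $h^0(X, \mathcal{I}_Z \otimes \mathcal{O}_X(4)) = 0$ and hence the Cayley--Bacharach condition. The inclusion $\mathcal{O}_X \hookrightarrow E$ then exhibits a nonzero section, so $h^0(X,E) \geq 1$; and $H$-stability of $E$ follows as in the earlier quintic computations, since a destabilizing sub-line-bundle $\mathcal{O}_X(m)$ necessarily has $m \geq 2$ and would then map nontrivially to the quotient, yielding a nonzero section of $\mathcal{I}_Z \otimes \mathcal{O}_X(3-m)$, which is impossible because $Z$ imposes independent conditions on the small linear system $|\mathcal{O}_X(3-m)|$. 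Thus $E \in B_2^0$, proving non-emptiness.

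For the dimension statement I would simply sandwich the dimension of each component between the two bounds recorded in the preceding lemmas. The Nijsse-type upper bound gives $\dim B_2^0 \leq 3c_2 - 36$, while the lower bound from Simpson (equivalently, the determinantal estimate of Theorem \ref{T1}) gives that every irreducible component has dimension at least $3c_2 - h^0(X, \mathcal{O}_X(3) \otimes K_X) - 1 = 3c_2 - 36$, upon substituting $h^0(X, \mathcal{O}_X(4)) = 35$. Combining the two inequalities forces every component to have dimension exactly $3c_2 - 36$, which is precisely the generalized Brill--Noether number $\rho_2^0$; hence every component has the expected dimension.

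I expect the real content to lie entirely in the non-emptiness step, and specifically in verifying that a general $Z$ yields a locally free and stable $E$ --- that is, in the Cayley--Bacharach and stability checks together with the careful bookkeeping of the threshold $c_2 \geq 36$. Once these are in place the dimension conclusion is formal, since the matching upper and lower bounds leave no room: they already coincide with $\rho_2^0$.
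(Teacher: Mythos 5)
Your proposal is correct and follows essentially the same route as the paper: non-emptiness via the Serre construction from a general $Z \in \text{Hilb}^{c_2}(X)^{\text{lci}}$ with the Cayley--Bacharach property for $\mathcal{O}_X(4)$ (using $h^0(X,\mathcal{O}_X(4))=35 < c_2$), and the dimension statement by sandwiching each component between Nijsse's upper bound $3c_2-36$ and the lower bound $3c_2 - h^0(X,\mathcal{O}_X(3)\otimes K_X)-1 = 3c_2-36 = \rho_2^0$. In fact you supply two details the paper only asserts, namely the colength-one subscheme check for Cayley--Bacharach and the explicit $H$-stability argument via sub-line-bundles $\mathcal{O}_X(m)$, $m \geq 2$, both of which are correct.
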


{\it Acknowledgement:} We would like to thank Prof. P. Newstead for his valueable 
comments and pointing out the gap in the earlier version.
We also would like to thank D.S. Nagaraj, V. Balaji, P. Sastry for their 
encouragement and helpful discussion. First named author would like to thank 
IISER Trivandrum for their hospitality during the stay where this work started.

\end{document}